\documentclass[reqno,11pt]{amsart}

\usepackage{amsmath}
\usepackage{amssymb}
\usepackage{latexsym}
\usepackage{tabmac}
\usepackage{mathdots}
\usepackage{booktabs}

\allowdisplaybreaks[3]

\numberwithin{equation}{section}

\setlength{\oddsidemargin}{0in} \setlength{\evensidemargin}{0in}
\setlength{\textwidth}{6in} \setlength{\topmargin}{0.1in}
\setlength{\headsep}{0.1in} \setlength{\textheight}{8.6in}

\newtheorem{lemma}{Lemma}[section]
 \newtheorem{prop}[lemma]{Proposition}
\newtheorem{thm}[lemma]{Theorem}
\newtheorem{cor}[lemma]{Corollary}
\newtheorem{dfn}[lemma]{Definition}

\newcommand{\C}{{\mathbb C}}
\newcommand{\Z}{{\mathbb Z}}

\newcommand{\Sptn}{ {\mathrm{Sp}}_{2n} }
\newcommand{\GL}{{ \mathrm{GL}} }
\newcommand{\Hom}{{ \mathrm{Hom}} } 

\newcommand{\twedge}{{\textstyle \bigwedge}}

\begin{document}

\title[A reciprocity law and the skew Pieri rule for the symplectic group]{
A reciprocity law and the skew Pieri rule for the symplectic group}

\author {Roger Howe}
\address{
Department of Teaching, Learning and Culture,
308 Harrington Tower,
4232 Texas A\&M University,
College Station,   TX  77843-4232 ,
USA.}

\email{ roger.howe@yale.edu}

\author {Roman L\' avi\v cka}
\address{Charles University, Faculty of Mathematics and Physics, Sokolovsk\'a 83, 186 75 Praha, Czech Republic}

\email{  lavicka@karlin.mff.cuni.cz }

\author {Soo Teck Lee }

 \address {Department of Mathematics, National University of Singapore,
Block S17, 10 Lower Kent Ridge Road, Singapore 119076, Singapore.}

 \email {matleest@nus.edu.sg}

\author {Vladim\'ir Sou\v cek }
\address{Charles University, Faculty of Mathematics and Physics, Sokolovsk\'a 83, 186 75 Praha, Czech Republic}

\email{   soucek@karlin.mff.cuni.cz}

\begin{abstract} We use the theory of skew duality to
show that decomposing the tensor product of $k$ irreducible representations 
 of the symplectic group $\mathrm{Sp}_{2m}=\mathrm{Sp}_{2m}(\C)$ is equivalent to branching from $\mathrm{Sp}_{2n}$ to $\mathrm{Sp}_{2n_1}\times\cdots\times\mathrm{Sp}_{2n_k}$ where $n,n_1,...,n_k$ are positive integers such that $n=n_1+\cdots+n_k$ and  the $n_j$s depend on $m$ as well as the representations in the tensor product. Using this result and a work of J. Lepowsky, we obtain a {\em skew Pieri rule} for $\mathrm{Sp}_{2m}$, i.e.,
a description of the irreducible decomposition of
the tensor product of an irreducible representation of the symplectic group $\mathrm{Sp}_{2m}$ with a fundamental representation.

\end{abstract}

\subjclass[2000]{20G05, 05E15} \keywords{Skew duality, Skew Pieri rule, Symplectic groups.}
\maketitle

\section{Introduction}
 It is well known that decomposing the tensor product of two irreducible polynomial representations of the general linear group $\mathrm{GL}_n=\mathrm{GL}_n(\C)$ is equivalent to the branching of a polynomial representation of $\mathrm{GL}_{k+\ell}$ to $\mathrm{GL}_k\times\mathrm{GL}_\ell$, where $k$ and $\ell$ depend on $n$ as well as the representations in the tensor product.  
This is one of a family of reciprocity laws in classical invariant theory  (\cite{How}). 

\medskip
More generally, using the theory of dual pair correspondences and see-saw dual pairs, one can obtain reciprocity theorems for  the branching rules for certain classical symmetric pairs  (\cite{HTW1, HTW}). 
Except for the case of $\mathrm{GL}_n$ described above,
these reciprocity theorems relate the  branching rule for finite dimensional representations of a symmetric pair to the branching rule for certain  infinite dimensional representations of another symmetric pair. For example, decomposing the tensor product of two irreducible rational representations of the symplectic group 
 $\mathrm{Sp}_{2n}=\mathrm{Sp}_{2n}(\C)$
is equivalent to the branching of certain irreducible holomorphic representations of $\mathfrak{so}_{p+q}$ to $\mathfrak{so}_p\oplus \mathfrak{so}_q$, where $p$ and $q$ depends on $n$ and the representations in the tensor product. 

\medskip
In this paper, we use the theory of skew duality (\cite{How}) to obtain a reciprocity law for the symplectic group that involves only finite dimensional representations. It  relates two sets of branching rules. Similar but somewhat more complicated results can be formulated for orthogonal groups.

\medskip
Recall that the irreducible rational representations of $\mathrm{Sp}_{2n}$ are indexed by Young diagrams $D$ with at most $n$ rows. We denote the rational representation of $\mathrm{Sp}_{2n}$ corresponding to $D$ by $\tau^D_{2n}$. 
For positive integers $n$ and $m$, let $\mathcal{R}_{n,m}$ be the set of all Young diagrams $D$ such that $D$ has at most $n$ rows and at most $m$ columns. Let $\iota_{n,m}:\mathcal{R}_{n,m}\to\mathcal{R}_{m,n}$ be the involution  defined in equation \eqref{iotanm}.

\medskip\noindent {\bf MAIN THEOREM.} {\em
  Let $n,n_1,...,n_k,m$ be positive integers such that $n=n_1+\cdots+n_k$, and let $D_1,...,D_k,E$ be Young diagrams such that 
$D_i\in\mathcal{R}_{n_i,m}$ for $1\le i\le k$ and 
  $E\in\mathcal{R}_{n,m}$. Then
\[\dim\mathrm{Hom}_H\left(\bigotimes_{i=1}^k \tau^{D_i}_{2n_i} ,\tau^E_{2n}\right)=\dim\mathrm{Hom}_{\mathrm{Sp}_{2m}}\left(\tau^{\iota_{n,m}(E)}_{2m},\bigotimes_{i=1}^k \tau^{\iota_{n_i,m}(D_i)}_{2m}\right ),\]
where $H=\mathrm{Sp}_{2n_1}\times\cdots\times\mathrm{Sp}_{2n_k}$ is regarded as a subgroup of $\mathrm{Sp}_{2n}$.
}

\medskip For $1\le r\le m$, let $\omega_r=\tau^{1^r}_{2m}$ where $1^r$ is the Young diagram which has  exactly $1$ column of length $r$ (so it has a total of $r$ boxes). Then $\omega_1,...,\omega_m$ are the fundamental representations of $\mathrm{Sp}_{2m}$. We will call a description of how a tensor product of the form $\tau^D_{2m}\otimes\omega_r$ decomposes into irreducible representations, a {\em skew Pieri rule}
for $\mathrm{Sp}_{2m}$.
According to the Main Theorem, we see that the skew Pieri rule for $\mathrm{Sp}_{2m}$ is equivalent to the branching rule from $\mathrm{Sp}_{2n+2}$ to $\mathrm{Sp}_{2n}\times \mathrm{Sp}_2$ where $n$ depends on $m$ and the representations in the tensor product. By using this fact and a result of J. Lepowsky (\cite{Lep}), we obtain a new skew Pieri rule for $\mathrm{Sp}_{2m}$.
For other known approaches, see e.g.\ \cite{HTW, KT, Lit, Nak, Sun, Wey}.

 \medskip This paper is arranged as follows. In Section 2, we introduce
 notation for representations of $\mathrm{Sp}_{2m}$. In Section 3, we
 describe the basic tool we are using from \cite{How} - the skew duality for the group
 $\mathrm{Sp}_{2m}$. Section 4 is the core of the paper, it contains the proof of the main theorem. As an application, we deduce a new version of the skew Pieri rule in Section 5.

 \bigskip
\begin{center}ACKNOWLEDGEMENT\end{center}
The first and the third-named authors are grateful to Charles University  for  warm hospitality and for providing a 
productive working atmosphere during their visit in Summer, 2016, which was partially supported by the grant GACR P201-12-G028.
The fourth author was also supported by the grant GACR P201-12-G028.

\section{Preliminaries}\label{preliminaries} 
Let $\mathrm{Sp}_{2n}=\mathrm{Sp}_{2n}(\C)$ be the subgroup of 
$\GL_{2n}(\C)$ which preserves
the symplectic form $(\cdot,\cdot)$ on $\C^{2n}$  given by
\begin{equation*}
( \begin{pmatrix} u_1 \\ \vdots \\ u_{2n}  \end{pmatrix},
  \begin{pmatrix} v_1\\ \vdots \\ v_{2n} \end{pmatrix} )
= \sum_{j=1}^n ( u_j v_{n+j} - u_{n+j} v_j).
\end{equation*}
In this section, we shall review some basic facts about  the representations of 
$\mathrm{Sp}_{2n}$.

\medskip
 First, recall that a {\em Young diagram} $D$
is an array of square boxes arranged in left-justified horizontal rows,
with each row no longer than the one just above it.
\[D=\  \tableau[s]{&&&&&\\
&&&\\
&&&\\
&\\}
\]
If $D$ has
at most $r$ rows, then we shall denote it by
\begin{equation*}
D=(d_1,...,d_r)
\end{equation*}
where for each $i$, $d_i$ is the number of boxes in the $i$-th row
of $D$. For the Young diagram above, we have $D=(6,4,4,2)$. 
In particular, $(d)$ is the Young diagram with one row and $d$ boxes. For a positive integer $c$, let $1^c$ denote the Young diagram with only one column and $c$ boxes, i.e.
\[1^c=(\overbrace{1,...,1}^c).\]
The {\em depth} of $D$, denoted by $\ell(D)$, is the number of
nonzero rows in $D$. The number 
$\sum_{i \geq 1} d_i$, which is the total number of boxes in $D$, will be denoted by $|D|$. 

\medskip
Let $A_{2n}$ be the diagonal torus of $\Sptn$, and let $U_{2n}$ be the maximal unipotent subgroup of $\Sptn$.
The  irreducible rational representations of $\Sptn$ are parametrized
by Young diagrams with at most $n$ rows. If $D$ is such a Young diagram, we
shall denote the corresponding $\Sptn$ representation by $\tau^D_{2n}$, and
denote its highest weight by $\psi^D_{2n}:A_{2n}\to \C^\times$.  
See Section 3.8 of \cite{How} for more details.

\section{Skew duality for the symplectic groups}

Our formulation of the duality theorem makes use of two involutions on the set of Young diagrams.
The first one is {\it conjugation}, $D \longrightarrow D^t$, which flips a diagram across the diagonal, turning the rows into columns, 
and the columns into rows. For example, $(d)^t=1^d$.
Note that  $\ell(D^t)$ is the number of columns in $D$, which equals the length of the first row of $D$.

For positive integers $n$ and $m$, let
\[{\mathcal R}_{n,m}:=\{D:\ \mbox{$D$ is a Young diagram with $\ell(D)\le n$ and $\ell(D^t)\le m$}\}.\]
For $D=(d_1,\ldots,d_n)\in{\mathcal R}_{n,m}$, we can describe the conjugate diagram $D^t$ explicitly as
$$
D^t=(\overbrace{\underbrace{n,\ldots,n}_{d_n},\underbrace{n-1,\ldots,n-1}_{d_{n-1}-d_n},
\ldots,\underbrace{1,\ldots,1}_{d_1-d_2},
\underbrace{0,\ldots,0}_{m-d_1}}^{m}).\;\;
 $$ 
Note that the conjugation maps ${\mathcal R}_{n,m}$ to ${\mathcal R}_{m,n}$.

Define an involution $r_{n,m}$ on ${\mathcal R}_{m,n}$. For $E=(a_1,...,a_m)\in\mathcal{R}_{m,n}$, define the involution
\begin{equation}\label{rnm}
r_{n,m}(E)=(n-a_m,n-a_{m-1},...,n-a_1).
\end{equation}

 \begin{dfn}\label{df_involutions}
 The map $\iota_{n,m}:\mathcal{R}_{n,m}\to \mathcal{R}_{m,n}$ is defined by
\begin{equation}\label{iotanm}
\iota_{n,m}(D):=r_{n,m}(D^t).
\end{equation}
Denote its inverse $j_{n,m}=\iota_{n,m}^{-1}$.
\end{dfn}

It is easy to see that, for $D=(d_1,\ldots,d_n)\in{\mathcal R}_{n,m},$ we have
\begin{equation}\label{i}
 \iota_{n,m}(D)=
 (\overbrace{\underbrace{n,\ldots,n}_{m-d_1},\underbrace{n-1,\ldots,n-1}_{d_1-d_2},\ldots,\underbrace{1,\ldots,1}_{d_{n-1}-d_n},
\underbrace{0,\ldots,0}_{d_n}}^{m}).
\end{equation}
For $E=(e_1,\ldots,e_m)\in \mathcal{R}_{m,n},$ we have 
$
j_{n,m}(E)=(r_{n,m}(E))^t
$
and
\begin{equation}\label{j}
 j_{n,m}(E)=
 ( \overbrace{\underbrace{m,\ldots,m}_{n-e_1},\underbrace{m-1,\ldots,m-1}_{e_1-e_2}
 \ldots,\underbrace{1,\ldots,1}_{e_{m-1}-e_m},
\underbrace{0,\ldots,0}_{e_m}}^{n}).
\end{equation}

 In geometric terms, the conjugation of a diagram corresponds
 to the diagram reflected along the main diagonal, while
 the involution $r_{n,m}$ amounts to the taking the complement of the diagram in the box of size
  $m\times n$, and rotating it
 by 180 degrees around the center of the box. For example: 
\[D=\  \tableau[s]{&&&&\\
&\\
&\\
\\}
\ \ \ \
D^t=\  \tableau[s]{&&&\\
&&\\
\\
\\
\\}
\ \ \ \
\iota_{6,5}(D)=\  \tableau[s]{&&&&\\
&&&&\\
&&&&\\
&&\\
&\\}
\]
 

\vskip .4 in

\medskip Next, we let
 $V_{n,m}:=\C^{2n}\otimes \C^m$ and consider the exterior algebra 
$\bigwedge(V_{n,m})$ on $V_{n,m}$. The symplectic group $\mathrm{Sp}_{2n}$ acts on $V_{n,m}$ via its action on the first factor, and this action is extended in the usual way to $\bigwedge(V_{n,m})$. Let $\mathrm{End}(V_{n,m})$ be the algebra of all endomorphisms of $V_{n,m}$. By \cite{How},  the algebra $\mathrm{End}_{\mathrm{Sp}_{2n}}(V_{n,m})$ of all endomorphisms which commute with the action by $\mathrm{Sp}_{2n}$ is generated by a Lie algebra isomorphic to $\mathfrak{sp}_{2m}=
\mathfrak{sp}_{2m}(\C)$. We shall abuse notation and denote this Lie algebra by $\mathfrak{sp}_{2m}$. Then $\bigwedge(V_{n,m})$ is a module for $\mathrm{Sp}_{2n}\times\mathfrak{sp}_{2m}$.
Since $\mathrm{Sp}_{2m}$ is connected and simply connected, a representation of $\mathfrak{sp}_{2m}$ gives rise to a unique representation of $\mathrm{Sp}_{2m}$ by exponentiation. Thus $\bigwedge(V_{n,m})$ is also a $\mathrm{Sp}_{2n}\times\mathrm{Sp}_{2m}$ and its structure is given by the following theorem:

\medskip\begin{prop}\label{howethm} {\rm (\cite[Theorem 3.8.9.3]{How})} We have the $\mathrm{Sp}_{2n}\times\mathrm{Sp}_{2m}$-module decomposition
\[{\textstyle\bigwedge}(V_{n,m})=\bigoplus_{D\in\mathcal{R}_{n,m}}\tau^D_{2n}\otimes\tau^{\iota_{n,m}(D)}_{2m}.\]
\end{prop}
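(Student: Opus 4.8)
The plan is to read the decomposition off the double commutant theorem and then to pin down the two families of highest weights that get matched. Granting the structural fact recalled just above --- that the algebra of endomorphisms of $\bigwedge(V_{n,m})$ commuting with $\mathrm{Sp}_{2n}$ is the image of the enveloping algebra $U(\mathfrak{sp}_{2m})$ --- the argument begins formally. Since $\mathrm{Sp}_{2n}$ is reductive and $\bigwedge(V_{n,m})$ is finite dimensional, both actions are completely reducible, so the double commutant theorem gives a multiplicity-free decomposition
\[
{\textstyle\bigwedge}(V_{n,m})=\bigoplus_{D}\tau^{D}_{2n}\otimes\sigma^{D},
\]
where the $\tau^{D}_{2n}$ are pairwise non-isomorphic irreducible $\mathrm{Sp}_{2n}$-modules and the $\sigma^{D}$ are pairwise non-isomorphic irreducible (hence, by connectivity and simple connectivity of $\mathrm{Sp}_{2m}$, rational) $\mathrm{Sp}_{2m}$-modules. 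The theorem is therefore reduced to two tasks: identifying the index set as $\mathcal{R}_{n,m}$, and showing $\sigma^{D}\cong\tau^{\iota_{n,m}(D)}_{2m}$.

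For the identification I would make the $\mathfrak{sp}_{2m}$-action explicit. Fixing a symplectic basis $e_1,\dots,e_{2n}$ of $\C^{2n}$ and a basis $f_1,\dots,f_m$ of $\C^{m}$, let $a^{*}_{ia}$ be exterior multiplication by $e_i\otimes f_a$ and $a_{ia}$ the dual contraction. The quadratic invariants
\[
E_{ab}=\sum_i a^{*}_{ia}a_{ib},\qquad
X_{ab}=\sum_{i,j}J^{ij}a^{*}_{ia}a^{*}_{jb},\qquad
Y_{ab}=\sum_{i,j}J_{ij}a_{ia}a_{jb}
\]
(with $J$ the symplectic form) generate the commutant; the $E_{ab}$ span a copy of $\mathfrak{gl}_m$, and --- because $J$ is antisymmetric while the $a^{*}_{ia}$ anticommute --- the $X_{ab}$ and $Y_{ab}$ are symmetric in $a\leftrightarrow b$, so they span $S^{2}(\C^{m})$ and $S^{2}(\C^{m})^{*}$ and complete the triangular decomposition of $\mathfrak{sp}_{2m}$. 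I would then locate the joint highest weight vectors, those annihilated by $U_{2n}$ and by the raising operators of $\mathfrak{sp}_{2m}$ (the $X_{ab}$ and the $E_{ab}$ with $a<b$); the $A_{2n}$-weight of such a vector is $\psi^{D}_{2n}$ for some $D$ with $\ell(D)\le n$, while its weight under the Cartan of $\mathfrak{sp}_{2m}$, carried by the $E_{aa}$, is determined by the occupation numbers of the states $e_i\otimes f_a$. Constructing one explicit joint highest weight vector for each $D\in\mathcal{R}_{n,m}$ then shows that every such $D$ occurs.

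The main obstacle is this last step: matching the $\mathfrak{sp}_{2m}$-label to $\iota_{n,m}(D)$ exactly, with the correct index range. The subtlety is that the $\mathfrak{sp}_{2m}$-Cartan eigenvalue on the $a$-th factor is not the raw occupation number $\nu_a\in\{0,\dots,2n\}$ but $\nu_a-n$, the normal-ordering shift by $n$ that centers the spectrum; it is precisely this shift, together with the particle--hole (occupied versus empty) symmetry of the fermionic Fock space, that turns the occupation data into the complement-in-the-$m\times n$-box-and-rotate operation $r_{n,m}$ of \eqref{rnm}, while the exchange of the $\C^{2n}$-index with the $\C^{m}$-index contributes the conjugation, reproducing $\iota_{n,m}=r_{n,m}\circ(\cdot)^{t}$ as in \eqref{iotanm}. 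The constraint that $D$ have at most $m$ columns drops out as exactly the requirement that the multiplicities in \eqref{i} be nonnegative, i.e.\ that $\iota_{n,m}(D)$ be a genuine dominant weight with at most $m$ parts. Rather than track the shift by hand through the combinatorics, I would confirm the whole correspondence at once by comparing the bigraded character $\prod_{i=1}^{2n}\prod_{a=1}^{m}(1+x_i y_a)$ with $\sum_{D\in\mathcal{R}_{n,m}}\chi^{\tau^{D}_{2n}}(x)\,\chi^{\tau^{\iota_{n,m}(D)}_{2m}}(y)$ through a symplectic Cauchy-type identity, which simultaneously certifies multiplicity one, the index set $\mathcal{R}_{n,m}$, and the involution $\iota_{n,m}$.
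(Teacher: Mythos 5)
The paper gives no proof of this proposition at all --- it is quoted directly from \cite[Theorem 3.8.9.3]{How} --- so there is no in-paper argument to compare against; your sketch is essentially a reconstruction of Howe's own proof. The structural steps are sound: complete reducibility plus the double commutant theorem does give a multiplicity-free pairing of irreducibles; the operators $E_{ab}$, $X_{ab}$, $Y_{ab}$ do span $\mathfrak{gl}_m\oplus S^2(\C^m)\oplus S^2(\C^m)^*\cong\mathfrak{sp}_{2m}$ (the symmetry of $X_{ab}$ in $a\leftrightarrow b$ is exactly as you say); and the $-n$ normal-ordering shift on the Cartan is precisely what converts occupation numbers into the complement-in-the-$m\times n$-box operation $r_{n,m}$, so that $\iota_{n,m}=r_{n,m}\circ(\cdot)^t$ emerges correctly. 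The one step you acknowledge but do not carry out is the actual identification of the $\mathfrak{sp}_{2m}$-label with $\iota_{n,m}(D)$: you offer either an explicit joint highest-weight-vector construction (not written down) or a symplectic dual Cauchy identity for the bigraded character (quoted, not proved). The latter route does close the argument, since linear independence of irreducible characters makes the character identity equivalent to the module decomposition, and note that the character to be expanded is $\prod_{i=1}^{n}\prod_{a=1}^{m}(x_i+x_i^{-1}+y_a+y_a^{-1})$ after the shift, not the raw $\GL$-character; but it imports a combinatorial identity (essentially \cite{Sun}) of comparable depth to the theorem itself. So the proposal is a correct outline consistent with the source's argument rather than a self-contained proof, which is an acceptable standard given that the paper itself simply cites the result.
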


\medskip\noindent In particular, by taking $n=1$, we  obtain the $\mathrm{Sp}_{2}\times\mathrm{Sp}_{2m}$-module decomposition
\[\twedge(V_{1,m})=\bigoplus_{0\le j\le m}\tau^{(j)}_{2}\otimes\tau^{1^{m-j}}_{2m}.\]
 

\section{The proof of the main theorem}
In this section, we shall prove the Main Theorem, as stated in Introduction.


\medskip
\noindent{\em Proof of the Main Theorem.}
Let
 \[G=\mathrm{Sp}_{2n},\quad H=\mathrm{Sp}_{2n_1}\times\mathrm{Sp}_{2n_2}\times\cdots\times\mathrm{Sp}_{2n_k},\]
\[U_G=U_{2n},\quad U_H=U_{2n_1}\times U_{2n_2}\times\cdots\times U_{2n_k} ,\]
\[A_G=A_{2n},\quad A_H=A_{2n_1}\times A_{2n_2}\times\cdots\times A_{2n_k}.\]
We consider the exterior algebra $\bigwedge(V_{n,m})$. It is a module for  $G\times\mathrm{Sp}_{2m}$, and by Proposition \ref{howethm},  it can be decomposed as 
\begin{equation}\label{wv1}\twedge(V_{n,m})=\bigoplus_{E\in\mathcal{R}_{n,m}}\tau^E_{2n}\otimes\tau^{\iota_{n,m}(E)}_{2m}.
\end{equation}
On the other hand, we can write $V_{n,m}$ as a direct sum
\[V_{n,m}=\C^{2n}\otimes\C^m=\left(\bigoplus_{i=1}^k\C^{2n_i}\right)\otimes\C^m\cong\bigoplus_{i=1}^k\left(\C^{2n_i}\otimes\C^m\right),\]
so that
\begin{equation}
\twedge(V_{n,m})\cong\bigotimes_{i=1}^k\twedge( \C^{2n_i}\otimes\C^m).\label{wdecom1}
\end{equation}
 We see from equation \eqref{wdecom1}  that 
$\bigwedge(V_{n,m})$  is also
 a module for 
\[(\mathrm{Sp}_{2n_1}\times\mathrm{Sp}_{2m})\times
(\mathrm{Sp}_{2n_2}\times\mathrm{Sp}_{2m})\times\cdots\times
(\mathrm{Sp}_{2n_k}\times\mathrm{Sp}_{2m}) \cong H\times \mathrm{Sp}_{2m}^{k}. \]
We shall restrict this action to $H\times \mathrm{Sp}_{2m}$, where $\mathrm{Sp}_{2m}$ is identified with the diagonal subgroup of $\mathrm{Sp}_{2m}^{k}$.

\medskip
By equation \eqref{wdecom1} and Proposition \ref{howethm},  $\bigwedge(V_{n,m})$ can be decomposed as an $H\times\mathrm{Sp}_{2m}$ module as
\begin{eqnarray}
\twedge(V_{n,m}) &\cong& \bigotimes_{i=1}^k\left(
\bigoplus_{D_i\in\mathcal{R}_{n_i,m}} \tau^{D_i}_{2n_i}\otimes\tau^{\iota_{n_i,m}(D_i)}_{2m}\right)\nonumber\\
&\cong&
\bigoplus_{D_i\in\mathcal{R}_{n_i,m},1\le i\le k}
\left\{
\left(\bigotimes_{i=1}^k \tau^{D_i}_{2n_i}\right)\otimes
\left(\bigotimes_{i=1}^k \tau^{\iota_{n_i,m}(D_i)}_{2m}  \right)\right\}.
 \label{wv2}
\end{eqnarray}

\medskip
We now consider the algebra 
$\twedge(V_{n,m})^{ U_H\times U_{2m}}$ of
$U_H\times U_{2m}$
 invariants in $\twedge(V_{n,m})$. It is a module for $A_H \times A_{2m}$. For Young diagrams $D_i\in\mathcal{R}_{n_i,m}$ for $1\le i\le k$, $E\in\mathcal{R}_{n,m}$, denote by
$W_{(D_1,...,D_k,E)}$ the space of $w\in\twedge(V_{n,m})^{ U_H\times U_{2m}}$ such that
\[(a,b).w=\psi^{(D_1,...,D_k)}_{H}(a )\psi^{\iota_{n,m}(E)}_{2m}(b)w\
\forall (a,b)\in A_H\times A_{2m}\]
where $\psi^{(D_1,...,D_k)}_H=\psi^{D_1}_{2n_1}\times\cdots\times
\psi^{D_k}_{2n_k}$.
Then $\twedge(V_{n,m})^{ U_H\times U_{2m}}$ can be
  decomposed as a direct sum
\[\twedge(V_{n,m})^{ U_H\times U_{2m}}=\bigoplus_{D_1,...,D_k,E}W_{(D_1,...,D_k,E)}\]
taken over all Young diagrams
$D_i\in\mathcal{R}_{n_i,m}$ $(1\le i\le k)$
  and $E\in\mathcal{R}_{n,m}$.

\medskip Using formula \eqref{wv1}, we obtain
\begin{eqnarray*}
\twedge(V_{n,m})^{ U_H\times U_{2m}}
&=&\bigoplus_{E\in\mathcal{R}_{n,m}}(\tau^E_{2n})^{U_H}\otimes
(\tau^{\iota_{n,m}(E)}_{2m})^{U_{2m}}\\
&=&\bigoplus_{E\in\mathcal{R}_{n,m}}
\left(\bigoplus_{D_1,...,D_k}
(\tau^E_{2n})^{U_H}_{D_1,...,D_k}\right)\otimes
(\tau^{\iota_{n,m}(E)}_{2m})^{U_{2m}},
\end{eqnarray*}
where $(\tau^E_{2n})^{U_H}_{D_1,...,D_k}$ is the space of $H$ highest weight vectors of weight $\psi^{(D_1,...,D_k)}_H$ in $\tau^E_{2n}$.
From this, we see that 
\[W_{(D_1,...,D_k,E)}=(\tau^E_{2n})^{U_H}_{D_1,...,D_k}\otimes
(\tau^{\iota_{n,m}(E)}_{2m})^{U_{2m}}.\]
 Since $\dim (\tau^{\iota_{n,m}(E)}_{2m})^{U_{2m}}=1$,
\begin{eqnarray}
\dim W_{(D_1,...,D_k,E)}&=&\dim( \tau^E_{2n})^{U_H}_{D_1,...,D_k}\dim
(\tau^{\iota_{n,m}(E)}_{2m})^{U_{2m}}=\dim( \tau^E_{2n})^{U_H}_{D_1,...,D_k}\nonumber\\
&=&\dim\mathrm{Hom}_H\left(\bigotimes_{i=1}^k \tau^{D_i}_{2n_i} ,\tau^E_{2n}\right),\label{dw1}
\end{eqnarray}
which is the multiplicity of $\bigotimes_{i=1}^k \tau^{D_i}_{2n_i} $ in $\tau^E_{2n}$.

\medskip Next, we use formula \eqref{wv2} to obtain

\begin{eqnarray*}
\twedge(V_{n,m})^{U_H\times U_{2m}} &\cong& 
\bigoplus_{D_i\in\mathcal{R}_{n_i,m},1\le i\le k}
\left\{
\left(\bigotimes_{i=1}^k \tau^{D_i}_{2n_i}\right)^{U_H}\otimes
\left(\bigotimes_{i=1}^k \tau^{\iota_{n_i,m}(D_i)}_{2m}  \right)^{U_{2m}}\right\}\\
&\cong& 
\bigoplus_{D_i\in\mathcal{R}_{n_i,m},1\le i\le k}
\left\{
\left(\bigotimes_{i=1}^k \tau^{D_i}_{2n_i}\right)^{U_H}\otimes
\left[\bigoplus_{E\in\mathcal{R}_{n,m}}
\left(\bigotimes_{i=1}^k \tau^{\iota_{n_i,m}(D_i)}_{2m}  \right)^{U_{2m}}_E\right]\right\}\\
\end{eqnarray*}
 where 
$\left(\bigotimes_{i=1}^k \tau^{\iota_{n_i,m}(D_i)}_{2m}  \right)^{U_{2m}}_E   $
 is the space of $\mathrm{Sp}_{2m}$ highest weight vectors of weight $\psi^{\iota_{n,m}(E)}_{2m}$ in
$\bigotimes_{i=1}^k \tau^{\iota_{n_i,m}(D_i)}_{2m}$.
 From this, we see that 
\[W_{(D_1,...,D_k,E)}
=\left(\bigotimes_{i=1}^k \tau^{D_i}_{2n_i}\right)^{U_H}\otimes
 \left(\bigotimes_{i=1}^k \tau^{\iota_{n_i,m}(D_i)}_{2m}  \right)^{U_{2m}}_E.\]
 Since $\dim \left(\bigotimes_{i=1}^k \tau^{D_i}_{2n_i}\right)^{U_H}    =1$,
\begin{eqnarray}
\dim W_{(D_1,...,D_k,E)}&=&\dim\left(\bigotimes_{i=1}^k \tau^{D_i}_{2n_i}\right)^{U_H}   \dim \left(\bigotimes_{i=1}^k \tau^{\iota_{n_i,m}(D_i)}_{2m}  \right)^{U_{2m}}_E  
=\dim \left(\bigotimes_{i=1}^k \tau^{\iota_{n_i,m}(D_i)}_{2m}  \right)^{U_{2m}}_E  
 \nonumber\\
&=&\dim\mathrm{Hom}_{\mathrm{Sp}_{2m}}\left(\tau^{\iota_{n,m}(E)}_{2m},\bigotimes_{i=1}^k \tau^{\iota_{n_i,m}(D_i)}_{2m}\right )   \label{dw2}
\end{eqnarray}
which is the multiplicity of $\tau^{\iota_{n,m}(E)}_{2m}$ in $\bigotimes_{i=1}^k \tau^{\iota_{n_i,m}(D_i)}_{2m}$.

\medskip Finally, by equation \eqref{dw1} and equation \eqref{dw2},
\[\dim W_{(D_1,...,D_k,E)}=\dim\mathrm{Hom}_H\left(\bigotimes_{i=1}^k \tau^{D_i}_{2n_i} ,\tau^E_{2n}\right)=\dim\mathrm{Hom}_{\mathrm{Sp}_{2m}}\left(\tau^{\iota_{n,m}(E)}_{2m},\bigotimes_{i=1}^k \tau^{\iota_{n_i,m}(D_i)}_{2m}\right ),\]
which completes the proof. $\Box$

\medskip
\begin{cor}\label{duality} 
Let $n,m,k$ be positive integers, $D\in\mathcal{R}_{n,m}$, $J=(j_1,...,j_k)\in\Z^k_{\ge 0}$, $E\in\mathcal{R}_{n+k,m}$,
$\tau^{\iota_{n,m}(D)\otimes (m-J)}_{2m}$ be the representation of $\mathrm{Sp}_{2m}$ given by
\[\tau^{\iota_{n,m}(D)\otimes (m-J)}_{2m} =\tau^{\iota_{n,m}(D)}_{2m}\otimes\left(\bigotimes_{a=1}^k\omega_{m-j_a}\right) \]
where $\omega_1,...,\omega_m$ are the fundamental representations of $\mathrm{Sp}_{2m}$ 
and $\tau^{(D,J)}_H$ be the representation of $H=\mathrm{Sp}_{2n}\times\mathrm{Sp}^k_2$ given by
\[\tau^{(D,J)}_H=\tau^D_{2n}\otimes\left( \bigotimes_{a=1}^k\tau^{(j_a)}_{2}\right)\]
Then
\[\dim\mathrm{Hom}_{\mathrm{Sp}_{2m}}(\tau^{\iota_{n+k,m}(E)}_{2m},\tau^{\iota_{n,m}(D)\otimes (m-J)}_{2m} )
 =\dim\mathrm{Hom}_H(\tau^{(D,J)}_H,\tau^E_{2(n+k)}).\]
\end{cor}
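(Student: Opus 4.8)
The plan is to obtain Corollary \ref{duality} as a direct specialization of the Main Theorem, so almost no new work is required. I would set $k+1$ in place of $k$ in the Main Theorem, take the first symplectic factor to be $\mathrm{Sp}_{2n}$ and the remaining $k$ factors to each be $\mathrm{Sp}_2$; that is, I would apply the Main Theorem with the decomposition $n+k = n + \underbrace{1 + \cdots + 1}_{k}$, so that $H = \mathrm{Sp}_{2n}\times\mathrm{Sp}_2^k$ and the ambient group is $\mathrm{Sp}_{2(n+k)}$. The Young diagram data to feed in are $D_1 = D \in \mathcal{R}_{n,m}$ for the first factor and $D_{1+a} = (j_a) \in \mathcal{R}_{1,m}$ for $a=1,\ldots,k$, together with the target diagram $E \in \mathcal{R}_{n+k,m}$ on the $\mathrm{Sp}_{2(n+k)}$ side.

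With these substitutions the left-hand side of the Main Theorem becomes $\dim\mathrm{Hom}_H\!\left(\tau^D_{2n}\otimes\bigotimes_{a=1}^k\tau^{(j_a)}_2,\ \tau^E_{2(n+k)}\right)$, which is exactly $\dim\mathrm{Hom}_H(\tau^{(D,J)}_H,\tau^E_{2(n+k)})$ once one unpacks the definition of $\tau^{(D,J)}_H$ given in the statement. The right-hand side becomes $\dim\mathrm{Hom}_{\mathrm{Sp}_{2m}}\!\left(\tau^{\iota_{n+k,m}(E)}_{2m},\ \tau^{\iota_{n,m}(D)}_{2m}\otimes\bigotimes_{a=1}^k\tau^{\iota_{1,m}((j_a))}_{2m}\right)$. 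So the only genuine identification I need to make is that each tensor factor $\tau^{\iota_{1,m}((j_a))}_{2m}$ coincides with the fundamental representation $\omega_{m-j_a}$. This is immediate from formula \eqref{i} applied with $n=1$: for the single-row diagram $(j_a)\in\mathcal{R}_{1,m}$ we get $\iota_{1,m}((j_a)) = 1^{\,m-j_a}$, the single column of length $m-j_a$, and by the definition $\omega_r = \tau^{1^r}_{2m}$ this is exactly $\omega_{m-j_a}$. Equivalently, this reading off is precisely the $n=1$ specialization of Proposition \ref{howethm} displayed just after its statement.

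Having matched both sides, the right-hand factor reassembles into $\tau^{\iota_{n,m}(D)\otimes(m-J)}_{2m}$ as defined in the corollary, and the two Hom-dimensions are equal by the Main Theorem. I do not anticipate a serious obstacle here; the content is entirely bookkeeping. The one point that warrants a sentence of care is the index-shift in the outer involution: the target $E$ lives in $\mathcal{R}_{n+k,m}$ and is mapped by $\iota_{n+k,m}$ rather than $\iota_{n,m}$, reflecting that the total rank on the symplectic side is $n+k$ and not $n$. Keeping that subscript straight, together with the constraint $j_a \le m$ needed for $(j_a)\in\mathcal{R}_{1,m}$ (so that $\omega_{m-j_a}$ is a bona fide fundamental representation with $0 \le m-j_a \le m$), is the only place where one could slip. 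Once these are checked, the corollary follows immediately. $\Box$
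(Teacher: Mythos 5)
Your proposal is correct and is precisely the intended derivation: the paper states Corollary \ref{duality} without proof as an immediate specialization of the Main Theorem with the decomposition $n+k = n + 1 + \cdots + 1$, and the identification $\tau^{\iota_{1,m}((j_a))}_{2m} = \tau^{1^{m-j_a}}_{2m} = \omega_{m-j_a}$ is exactly the $n=1$ case of Proposition \ref{howethm} displayed in the paper. Your attention to the subscript $\iota_{n+k,m}$ on $E$ and the implicit constraint $j_a \le m$ is appropriate bookkeeping; nothing is missing.
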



\medskip
\section{Skew Pieri rule for $\mathrm{Sp}_{2m}$}

Using Corollary \ref{duality} for $k=1$, we can translate the explicit branching formula proved by J. Lepowsky \cite{Lep} to an explicit skew Pieri rule for  $\mathrm{Sp}_{2m}$.

 \subsection{Explicit branching formula}\label{br}
 
First let us recall the branching from $\mathrm{Sp}_{2n+2}$ to $\mathrm{Sp}_{2n}$.
Let $G$ and $E$ be Young diagrams with $\ell(G)\le n+1$ and $\ell(E)\le n$, 
  $G=(g_1,\ldots,g_{n+1})$ and  $E=(e_1,\ldots,e_n)$. 
We say that $E$ {\it doubly interlaces} $G$ if
$g_i\geq e_i\geq g_{i+2}$ for $i=1,\ldots,n$,
with the convention $g_{n+2}=0;$ and  we write $E\sqsubset\,\sqsubset G,$ or 
$G\sqsupset\,\sqsupset E.$
Then the branching
of $\tau_{2n+2}^G$ to $\mathrm{Sp}_{2n}$ is in general not multiplicity free
and $\tau_{2n}^E$ appears as a $\mathrm{Sp}_{2n}$ subrepresentation of $\tau^G_{2n+2}$ if and only if  $E$ doubly interlaces $G$.
Furthermore, for $i=1,\ldots, n+1$, we define the numbers 
\begin{equation}\label{rho}
\rho_i(G,E)=x_i-y_i
\end{equation} where
$\{x_1\geq y_1\geq x_2\geq y_2\geq \ldots\geq 
x_{n+1}\geq y_{n+1}\}$ is the rearrangement of the sequence
$\{g_1\ldots,g_{n+1},e_1,\ldots,e_{n},0\}
$
into a weakly decreasing sequence. Then it is well-known that
$$\dim \Hom_{\mathrm{Sp}_{2n}} (\tau_{2n}^E,\tau_{2n+2}^G)=\prod_{i=1}^{n+1}(\rho_i(G,E)+1),$$
see \cite[Theorem 8.1.5]{GW}.

To formulate the explicit formula for branching from
$\mathrm{Sp}_{2n+2}$ to $\mathrm{Sp}_{2n}\times \mathrm{Sp}_2$  we need more notation. 
Let $\{v_1,\ldots,v_{n+1}\}$ be the standard basis for
$\mathbb{R}^{n+1}$ and let
$$\Sigma_{n+1}:=\{v_1\pm v_{n+1},\ldots, v_{n}\pm v_{n+1}\}.$$
For a vector $v$ in $\mathbb{Z}^{n+1}$, define the Kostant partition function $\mathcal{P}_{n+1}(v)$ as the number of ways of writing
$$
v=\sum_{w\in\Sigma_{n+1}}c_w\;w,\ c_w\in\mathbb{N}. 
$$
Then J. Lepowsky 
\cite{Lep} proved the following result.

\medskip
\begin{thm} 
\label{Lep} If $G$ is a Young diagram with $\ell(G)\le n+1$, then we have 
\begin{equation}\label{eq_branching}
\tau_{2n+2}^G\cong \bigoplus_{{E},\,{E\,\sqsubset\,\sqsubset\, G}\,}
 {\bf\rm  m}^G_{E,(\ell)} \tau_{2n}^E\otimes  \tau_2^{(\ell)}
\end{equation}
where the multiplicities ${\bf\rm  m}^G_{E,(\ell)}$ are given by
$$
 {\bf\rm  m}^G_{E,(\ell)}=
\mathcal{P}_{n+1}(\rho_1v_1+\ldots+\rho_{n+1}v_{n+1}-\ell v_{n+1})-
\mathcal{P}_{n+1}(\rho_1v_1+\ldots+\rho_{n+1}v_{n+1}+(\ell+2) v_{n+1}),
$$
and $\rho_i=\rho_i(G,E)$  as in \eqref{rho}.
\end{thm}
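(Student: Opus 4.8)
The plan is to recognize the two-term formula as a \emph{collapsed} instance of Kostant's branching formula for the reductive pair $\mathfrak{sp}_{2n}\oplus\mathfrak{sp}_2\subset\mathfrak{sp}_{2n+2}$, with the collapse governed by the classical symplectic branching already recalled above, leaving a residual rank-one computation for the $\mathrm{Sp}_2$-factor. First I would set up the reduction. Since $\mathrm{Sp}_{2n}$ and $\mathrm{Sp}_2$ centralize each other inside $\mathrm{Sp}_{2n+2}$, the multiplicity space
\[
M_E:=\Hom_{\mathrm{Sp}_{2n}}(\tau^E_{2n},\tau^G_{2n+2})
\]
is naturally an $\mathrm{Sp}_2$-module, and $\tau^G_{2n+2}|_{\mathrm{Sp}_{2n}\times\mathrm{Sp}_2}\cong\bigoplus_E \tau^E_{2n}\otimes M_E$, so that $m^G_{E,(\ell)}=\dim\Hom_{\mathrm{Sp}_2}(\tau^{(\ell)}_2,M_E)$. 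Forgetting the $\mathrm{Sp}_2$-action and invoking \cite[Theorem 8.1.5]{GW}, one already knows $\dim M_E=\prod_{i=1}^{n+1}(\rho_i+1)$ and $M_E=0$ unless $E\sqsubset\,\sqsubset G$. Thus the whole statement is equivalent to determining $M_E$ as an $\mathrm{Sp}_2$-module, and my claim (of which only the $\mathrm{Sp}_2$-character is actually needed) is the structural isomorphism
\[
M_E\cong\bigotimes_{i=1}^{n+1}\tau^{(\rho_i)}_2 ,\qquad \rho_i=\rho_i(G,E),
\]
a tensor product of $\mathrm{Sp}_2=\mathrm{SL}_2$-irreducibles of dimensions $\rho_i+1$.

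The main step is to compute the $\mathrm{Sp}_2$-character of $M_E$, i.e.\ to show that the generating function $\sum_j(\dim M_E[j])\,q^j$ of its $T_2$-weight spaces equals $\prod_{i=1}^{n+1}\big(q^{\rho_i}+q^{\rho_i-2}+\cdots+q^{-\rho_i}\big)$. The engine here is Kostant's branching formula for $\mathfrak{sp}_{2n}\oplus\mathfrak{sp}_2\subset\mathfrak{sp}_{2n+2}$ in the form developed by Lepowsky \cite{Lep}. With the shared Cartan spanned by $v_1,\ldots,v_{n+1}$, the positive roots of $\mathfrak{sp}_{2n+2}$ that are \emph{not} roots of the subalgebra are exactly $\Sigma_{n+1}=\{v_i\pm v_{n+1}:1\le i\le n\}$, so the function $\mathcal{P}_{n+1}$ in the statement is precisely the \emph{relative} Kostant partition function of the pair. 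Kostant's formula then writes each branching multiplicity as an alternating sum of values of $\mathcal{P}_{n+1}$ over the Weyl group $W(C_{n+1})$, and the crux is that this sum collapses: the $W(C_n)$-part, acting on $v_1,\ldots,v_n$, telescopes into the classical $\mathrm{Sp}_{2n+2}\downarrow\mathrm{Sp}_{2n}$ branching, which is what produces the combinatorially defined $\rho_i$ and the product structure (mirroring the disjoint decomposition $\Sigma_{n+1}=\bigsqcup_{i=1}^n\{v_i+v_{n+1},\,v_i-v_{n+1}\}$ into rank-one subsystems), leaving only the two elements $\{1,s\}$ of the residual rank-one Weyl group $W(\mathrm{Sp}_2)$.

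Granting the factorized character, the rest is pure $\mathrm{SL}_2$ bookkeeping. The multiplicity of $\tau^{(\ell)}_2$ in a module of character $\sum_j c_j q^j$ is $c_\ell-c_{\ell+2}$, and a short weight-counting (telescoping) computation converts $c_\ell-c_{\ell+2}$ for $\bigotimes_i\tau^{(\rho_i)}_2$ into
\[
\mathcal{P}_{n+1}\Big(\sum_{i=1}^{n+1}\rho_i v_i-\ell v_{n+1}\Big)-\mathcal{P}_{n+1}\Big(\sum_{i=1}^{n+1}\rho_i v_i+(\ell+2)v_{n+1}\Big).
\]
The two arguments are exchanged by the dot-action $\ell\mapsto-\ell-2$ of the nontrivial $s\in W(\mathrm{Sp}_2)$ (since $\rho_{\mathfrak{sp}_2}=v_{n+1}$), and the minus sign is $\epsilon(s)$; this is exactly the residual rank-one Kostant sum, reproducing the stated formula and confirming the interpretation above. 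An alternative to the Kostant sum would be to construct the $\mathfrak{sl}_2$-action of $\mathrm{Sp}_2$ directly on an explicit (Gelfand--Tsetlin-type) basis of $M_E$ and read off the string lengths $\rho_i$, but either way the combinatorial endgame is routine.

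The hard part is the collapse in the middle step: controlling the $W(C_{n+1})$ alternating sum so that the $\mathrm{Sp}_{2n}$-directions factor out into the interlacing-governed product $\prod(\rho_i+1)$ and only a genuine rank-one problem remains. Equivalently, the obstacle is to prove the structural isomorphism $M_E\cong\bigotimes_{i=1}^{n+1}\tau^{(\rho_i)}_2$ rather than merely to match dimensions; everything preceding (the reduction) and following (the $\mathrm{SL}_2$ weight count) is elementary.
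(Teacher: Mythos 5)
The first thing to say is that the paper contains no proof of this statement: Theorem \ref{Lep} is quoted from Lepowsky's announcement \cite{Lep}, so you are not competing with an argument in the text but with a citation. Your architecture is nevertheless the historically correct one. The reduction to determining $M_E=\Hom_{\mathrm{Sp}_{2n}}(\tau^E_{2n},\tau^G_{2n+2})$ as an $\mathrm{Sp}_2$-module is sound; your identification of $\Sigma_{n+1}=\{v_i\pm v_{n+1}:1\le i\le n\}$ as exactly the positive roots of $\mathfrak{sp}_{2n+2}$ not belonging to $\mathfrak{sp}_{2n}\oplus\mathfrak{sp}_2$, hence of $\mathcal{P}_{n+1}$ as the relative Kostant partition function, is correct; and your final $\mathrm{SL}_2$ bookkeeping checks out: granting $M_E\cong\bigotimes_{i=1}^{n+1}\tau^{(\rho_i)}_2$, iterated Clebsch--Gordan plus telescoping does yield $m^G_{E,(\ell)}=\mathcal{P}_{n+1}(\sum_i\rho_iv_i-\ell v_{n+1})-\mathcal{P}_{n+1}(\sum_i\rho_iv_i+(\ell+2)v_{n+1})$, which is precisely the content of \eqref{CGcoef}.

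The genuine gap is that the single step carrying all of the theorem's content is asserted rather than proved, as you yourself concede in the last paragraph. The collapse of Kostant's alternating sum over $W(C_{n+1})$ (of order $2^{n+1}(n+1)!$) down to the two terms indexed by the residual rank-one Weyl group --- equivalently, the emergence of the double-interlacing condition $E\sqsubset\,\sqsubset G$ and of the rearranged differences $\rho_i(G,E)$ from the $W(C_n)$-part of the sum --- \emph{is} Lepowsky's theorem; nothing in the proposal establishes it. Worse, the structural isomorphism $M_E\cong\bigotimes_i\tau^{(\rho_i)}_2$ through which you propose to route the argument is exactly Theorem \ref{WY} of Wallach and Yacobi \cite{WY}, a strictly later result which the paper also only quotes and which is usually obtained by comparing an independently proved $\mathrm{SL}_2$ tensor-product multiplicity formula with Lepowsky's formula --- so invoking it here without an independent proof risks circularity. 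To close the gap you would have to actually execute the Weyl-sum collapse (for instance by showing that for $w$ outside the residual $\{1,s\}$ the relevant arguments of $\mathcal{P}_{n+1}$ leave the support cone of $\Sigma_{n+1}$ or cancel in pairs), or else construct the $\mathfrak{sl}_2$-action on an explicit basis of $M_E$ and exhibit the string lengths $\rho_i$ directly; either of these is the real work, and neither is done.
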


In \cite{WY}, N. Wallach and O. Yacobi showed that 
\begin{equation}\label{CGcoef}
 {\bf\rm  m}^G_{E,(\ell)}
=\dim \Hom_{\mathrm{Sp}_2}(\tau_2^{(\ell)},\tau_2^{(\rho_1)}
\otimes\ldots\otimes \tau_2^{(\rho_{n+1})})
\end{equation}
where $\rho_i=\rho_i(G,E)$ 
and formulated the following theorem.
 
\medskip
\begin{thm}{\rm (\cite[Theorem 3.3]{WY})}\label{WY} 
If $G$ is a Young diagram with $\ell(G)\le n+1$, then we have 
 
$$
\tau_{2n+2}^G\cong \bigoplus_{{E},\,{E\,\sqsubset\,\sqsubset\, G}\,}
\tau_{2n}^E\otimes (\tau_2^{(\rho_1)}\otimes\ldots\otimes \tau_2^{(\rho_{n+1})})
$$
as $\mathrm{Sp}_{2n}\times \mathrm{Sp}_2$ modules
where for each $E$ in the sum, $\rho_i=\rho_i(G,E)$  as in \eqref{rho}.
\end{thm}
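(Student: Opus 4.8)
The plan is to obtain Theorem \ref{WY} by feeding the Clebsch--Gordan reinterpretation \eqref{CGcoef} of Lepowsky's multiplicities into the explicit branching \eqref{eq_branching}, and then recognizing, for each fixed $E$, the resulting $\mathrm{Sp}_2$-module as a genuine tensor product of $\mathrm{Sp}_2$-representations.

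First I would read \eqref{eq_branching} as a decomposition indexed by pairs $(E,\ell)$ with $E\sqsubset\sqsubset G$ and $\ell\ge 0$, so that $\mathbf{m}^G_{E,(\ell)}$ is precisely the multiplicity of the $\mathrm{Sp}_{2n}\times\mathrm{Sp}_2$-irreducible $\tau_{2n}^E\otimes\tau_2^{(\ell)}$ in $\tau_{2n+2}^G$. Collecting all terms carrying a fixed $E$ then exhibits the corresponding isotypic part as $\tau_{2n}^E\otimes\left(\bigoplus_{\ell\ge 0}\mathbf{m}^G_{E,(\ell)}\,\tau_2^{(\ell)}\right)$, a completely reducible $\mathrm{Sp}_2$-module whose composition is controlled only by the numbers $\rho_i=\rho_i(G,E)$.

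Next I would apply \eqref{CGcoef}. Since $\tau_2^{(\ell)}$ is irreducible, $\dim\Hom_{\mathrm{Sp}_2}\bigl(\tau_2^{(\ell)},\tau_2^{(\rho_1)}\otimes\cdots\otimes\tau_2^{(\rho_{n+1})}\bigr)$ is exactly the multiplicity of $\tau_2^{(\ell)}$ in the tensor product $\tau_2^{(\rho_1)}\otimes\cdots\otimes\tau_2^{(\rho_{n+1})}$. Thus \eqref{CGcoef} asserts that the inner module $\bigoplus_\ell\mathbf{m}^G_{E,(\ell)}\,\tau_2^{(\ell)}$ and the tensor product $\tau_2^{(\rho_1)}\otimes\cdots\otimes\tau_2^{(\rho_{n+1})}$ contain every $\mathrm{Sp}_2$-irreducible with the same multiplicity; by complete reducibility of $\mathrm{Sp}_2$-modules they are isomorphic. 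Substituting this isomorphism for each $E$ and reassembling the sum over $E\sqsubset\sqsubset G$ yields the decomposition claimed in Theorem \ref{WY}.

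The hard part is not this repackaging but the identity \eqref{CGcoef} itself, which I regard as the main obstacle for a self-contained argument. Establishing it means showing that the difference of Kostant partition functions in Theorem \ref{Lep} equals an $\mathrm{SL}_2$ Clebsch--Gordan number. Writing $\rho_1v_1+\cdots+\rho_{n+1}v_{n+1}-\ell v_{n+1}$ as a nonnegative integer combination of $\Sigma_{n+1}=\{v_i\pm v_{n+1}:1\le i\le n\}$ forces, for each $i\le n$, the two coefficients to sum to $\rho_i$ while their difference ranges over the weights of $\tau_2^{(\rho_i)}$; hence $\mathcal{P}_{n+1}(\cdots-\ell v_{n+1})$ counts the weight-$(\rho_{n+1}-\ell)$ space of the partial product $\tau_2^{(\rho_1)}\otimes\cdots\otimes\tau_2^{(\rho_n)}$. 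Subtracting the $+(\ell+2)v_{n+1}$ term and telescoping against the weights of the remaining factor $\tau_2^{(\rho_{n+1})}$, together with the symmetry $\dim N_w=\dim N_{-w}$ of $\mathrm{SL}_2$ weight multiplicities, rewrites $\mathbf{m}^G_{E,(\ell)}$ as the full weight-$\ell$ minus weight-$(\ell+2)$ count of $N=\tau_2^{(\rho_1)}\otimes\cdots\otimes\tau_2^{(\rho_{n+1})}$, i.e.\ the sought multiplicity of $\tau_2^{(\ell)}$ in $N$. Granting \eqref{CGcoef}, everything else is routine multiplicity matching.
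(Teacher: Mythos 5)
Your proof is correct and takes essentially the route the paper intends: the paper gives no proof of Theorem \ref{WY}, presenting it as an immediate repackaging of Lepowsky's formula \eqref{eq_branching} via the identity \eqref{CGcoef} and citing \cite{WY} for both. Your extra verification of \eqref{CGcoef} — identifying $\mathcal{P}_{n+1}(\rho_1v_1+\cdots+\rho_{n+1}v_{n+1}-\ell v_{n+1})$ with the dimension of the weight-$(\rho_{n+1}-\ell)$ space of $\tau_2^{(\rho_1)}\otimes\cdots\otimes\tau_2^{(\rho_n)}$ and telescoping against the weights of the last factor — also checks out (the weight symmetry you need is that of the partial product, not of $N$ itself, but both hold, so this is a harmless slip).
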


 \subsection{Explicit formula for the skew Pieri rule}\label{6.3}
 
 The skew duality theorem combined with the multiplicity computations described above yields the following explicit formula
 for the skew Pieri rule.
 
 \begin{thm}\label{thm_pieri}
 Let $\omega_r=\tau_{2m}^{(1^r)}$ be the $r$-th fundamental representation of ${\rm Sp}_{2m}$, $D$  a Young diagram with $\ell(D)\le m$ and $n$  a positive integer such that  $n\ge\ell(D^t)$. Then we have $D\in\mathcal{R}_{m,n}, $ 
  $j_{n,m}(D)\in\mathcal{R}_{n,m}$ and
$$ \tau_{2m}^D\otimes \omega_r
 \cong
 \bigoplus_{
  \begin{array}{c}G\,{\sqsupset\,\sqsupset\, j_{n,m}(D)}\,\\G\in\mathcal{R}_{n+1,m}\end{array}
 }
 {\bf\rm  m}\,^G_{j_{n,m}(D),(m-r)}\;\;\tau_{2m}^{\iota_{n+1,m}(G)}.
 $$
Here $   {\bf\rm  m}\,^G_{j_{n,m}(D),(m-r)}   $ is defined in Theorem \ref{Lep} and the involutions $\iota_{n+1,m}$ and $j_{n,m}$ in Definition \ref{df_involutions}.
 \end{thm}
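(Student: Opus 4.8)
The plan is to reduce the statement to the $k=1$ case of Corollary \ref{duality} and then evaluate the resulting branching multiplicity using Lepowsky's formula (Theorem \ref{Lep}). First I would settle the bookkeeping: from $\ell(D)\le m$ and $n\ge \ell(D^t)$ we get $D\in\mathcal{R}_{m,n}$, so that $j_{n,m}(D)\in\mathcal{R}_{n,m}$ is defined and $\iota_{n,m}(j_{n,m}(D))=D$. I would then specialize Corollary \ref{duality} to $k=1$, taking the diagram there to be $j_{n,m}(D)$ and the single integer $J=(m-r)$; since then $\omega_{m-j_1}=\omega_r$ and $\tau^{(j_1)}_2=\tau^{(m-r)}_2$, for every $G\in\mathcal{R}_{n+1,m}$ the corollary gives
\[
\dim\mathrm{Hom}_{\mathrm{Sp}_{2m}}\bigl(\tau^{\iota_{n+1,m}(G)}_{2m},\,\tau^D_{2m}\otimes\omega_r\bigr)
=\dim\mathrm{Hom}_{\mathrm{Sp}_{2n}\times\mathrm{Sp}_2}\bigl(\tau^{j_{n,m}(D)}_{2n}\otimes\tau^{(m-r)}_2,\,\tau^G_{2(n+1)}\bigr).
\]
The right-hand side is the multiplicity of $\tau^{j_{n,m}(D)}_{2n}\otimes\tau^{(m-r)}_2$ in the restriction of $\tau^G_{2n+2}$ to $\mathrm{Sp}_{2n}\times\mathrm{Sp}_2$, which by Theorem \ref{Lep} equals $\mathbf{m}^G_{j_{n,m}(D),(m-r)}$ and is nonzero only when $j_{n,m}(D)\sqsubset\,\sqsubset G$.

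The step I expect to be the crux is completeness: I must know that the diagrams $\iota_{n+1,m}(G)$, $G\in\mathcal{R}_{n+1,m}$, exhaust every irreducible constituent of $\tau^D_{2m}\otimes\omega_r$, so that the multiplicities computed above assemble into the entire decomposition and not merely a sub-sum. Any constituent $\tau^F_{2m}$ has $\ell(F)\le m$ automatically, so $F$ lands in the image $\mathcal{R}_{m,n+1}$ of $\iota_{n+1,m}$ precisely when it has at most $n+1$ columns, i.e. $f_1\le n+1$. To establish this I would argue on weights: every weight of $\tau^D_{2m}\otimes\omega_r$ is a sum of a weight of $\tau^D_{2m}$, whose first coordinate is at most $d_1=\ell(D^t)\le n$, and a weight of $\omega_r\subseteq\bigwedge^r(\C^{2m})$, whose first coordinate is at most $1$. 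Hence the first coordinate of any highest weight occurring is at most $n+1$, which is exactly $f_1\le n+1$. Consequently every constituent equals $\tau^{\iota_{n+1,m}(G)}_{2m}$ for a unique $G=j_{n+1,m}(F)\in\mathcal{R}_{n+1,m}$.

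Putting the two halves together then yields the theorem: summing over $G\in\mathcal{R}_{n+1,m}$, each $\tau^{\iota_{n+1,m}(G)}_{2m}$ occurs in $\tau^D_{2m}\otimes\omega_r$ with multiplicity $\mathbf{m}^G_{j_{n,m}(D),(m-r)}$, and the surviving terms are exactly those with $G\sqsupset\,\sqsupset j_{n,m}(D)$, which is the asserted formula. Apart from the completeness argument, everything is a direct specialization of Corollary \ref{duality} together with a translation of Lepowsky's multiplicity through the involutions $\iota_{n+1,m}$ and $j_{n,m}$ of Definition \ref{df_involutions}; I would take care only that the index shift $r\mapsto m-r$ and the identification $\tau^{(j_1)}_2=\tau^{(m-r)}_2$ are applied consistently.
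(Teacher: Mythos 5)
Your proposal is correct and follows essentially the same route as the paper's own proof: specialize Corollary \ref{duality} to $k=1$ with $J=(m-r)$, identify the resulting branching multiplicity with $\mathbf{m}^G_{j_{n,m}(D),(m-r)}$ via Theorem \ref{Lep}, and establish completeness by the weight argument showing every constituent $\tau^F_{2m}$ satisfies $f_1\le n+1$ (the paper phrases this via highest weights of constituents being the highest weight of $\tau^D_{2m}$ plus a weight of $\omega_r$, but the content is the same). No gaps.
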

 
 \medskip
 \begin{proof}
 Let us consider multiplicities $ d^F_{D,r}$ in the decomposition of the tensor
 product defined by the formula
 $$\label{decomp}
 \tau_{2m}^D\otimes\omega_r
 \cong\bigoplus_{F} d^F_{D,r}\,\tau_{2m }^F.
 $$
 Since $D\in\mathcal{R}_{m,n}, $ the multiplicity
$ d^F_{D,r}$ can be strictly positive only if
  $F\in\mathcal{R}_{m,n+1}.$ Indeed,
 it is well known that any highest weight in the decomposition of the tensor product
 is a sum of the highest weight of the first factor and a weight 
of the second one.
But all components of all weights in a fundamental representation of $\mathrm{Sp}_{2m}$ are less or equal to one.
Suppose now that $d^F_{D,r}>0$ for some $F\in\mathcal{R}_{m,n+1}$ and
 set $G=j_{n+1,m}(F),$ so   $F=\iota_{n+1,m}(G).$
 
 We know that the module
 $\tau_{2n}^{j_{n,m}(D)}$ can appear in the branching
 of $\tau_{2n+2}^G$ if and only if 
 $j_{n,m}(D)$ doubly interlaces $G.$
 Finally, by Corollary \ref{duality} for $k=1$ and Theorem \ref{Lep}, we have
 $$
 d^F_{D,r}=d^{\iota_{n+1,m}(G)}_{D,r}=
 {\bf\rm  m}\,^G_{j_{n,m}(D),(m-r)},
 $$
which completes the proof.
   \end{proof}
 
 \subsection{Example}
 
 We shall apply Theorem \ref{thm_pieri} to decompose the tensor product
 $\tau_{2m}^D\otimes\omega_r$ for the
 case $D=(52)$ and $r=4$, when $m=5.$
  Note that
 this is a case where the stable range formula from 
 \cite[2.1.3]{HTW} does not apply. To respect the condition $D\in\mathcal{R}_{m,n},$ 
  we choose $n=5$ (but the computation is similar for
  any choice of  $n$ with $n\geq d_1=5$).
 
 By (\ref{j}),  we get $E:=j_{5,5}(D)=(44433).$
 The conditions on ${G=(g_1,\ldots, g_6)}$, that is,  
$$G\,\sqsupset\,\sqsupset\, E\mbox{\ \
 and\ \ }G\in\mathcal{R}_{6,5},$$
 give the following possibilities for $G:$
 $5\geq g_1\geq g_2\geq 4$, $g_3=4$, $4\geq g_4\geq g_5\geq 3$, $j:=g_6\leq 3.$
 Thus there are 9 choices for $(g_1,\ldots, g_5),$ and for each such choice,
 there are four possible values for $j.$
 
For example, suppose that $G=(44433j).$
Then we get $\rho=(\rho_1,\ldots,\rho_6)=(00000j)$ where $\rho_i=\rho_i(G,E)$ are as in \eqref{rho}.
Using the Clebsch-Gordan formula and \eqref{CGcoef},
we show that ${\bf\rm  m}^G:={\bf\rm  m}^G_{E,(1)}$ is nontrivial just for $j=1$. Moreover, for $j=1$, we have ${\bf\rm  m}^G=1$ and $\iota_{6,5}(G)=(63110)$ by (\ref{i}).

The  most interesting case is $G=(54443j).$
Then $\rho=(10010j)$ and, using the Clebsch-Gordan formula and \eqref{CGcoef}, it is easy to see that
${\bf\rm  m}^G=2$ for $j=1,$
and ${\bf\rm  m}^G=1$ for $j=3.$ Otherwise, ${\bf\rm  m}^G=0$.
Indeed, we have
$$\tau^{(1)}_2\otimes\tau^{(1)}_2\otimes\tau^{(1)}_2=\tau^{(3)}_2\otimes 2\tau^{(1)}_2\mbox{\ \ and\ \ }\tau^{(1)}_2\otimes\tau^{(1)}_2\otimes\tau^{(3)}_2=\tau^{(5)}_2\otimes2\tau^{(3)}_2\otimes\tau^{(1)}_2.$$
The resulting summands $\iota(G)=\iota_{6,5}(G)$ for $j=1,3$ in the decomposition are then
$$
2\,(52110)\oplus (52000).
$$
Here we write $F$ for $\tau^F_{2m}$ as usual.
All results are summarized in the table below. The decomposition
of the tensor product $(52)\otimes(1111)$ of $\mathrm{Sp}_{10}$-modules
has $15$ summands (including multiplicities).

 \medskip

 \begin{tabular}{p{15mm}p{15mm}p{5mm}p{5mm}|p{15mm}||p{15mm}p{15mm}p{5mm}p{5mm}|p{15mm}}
\midrule
$\hspace{5mm}G$ &$\hspace{5mm}\rho$ &$j$ &${\bf\rm  m}^G$  &$\hspace{3mm}\iota(G)$
&$\hspace{5mm}G$ &$\hspace{5mm}\rho$ &$j$ &${\bf\rm  m}^G$  &$\hspace{3mm}\iota(G)$\\
\midrule
(44433j) &(00000j) &1  &1 &(63110)&(44444j) &(00000j) &1 &1 &(61110)\\
\midrule
(54433j) &(10000j) &0 \;2 &1 \;1 &(53111) (53100)&(54444j) &(00000j) &0 \;2 &1 \;1  &(51111) (51100)\\
\midrule
(55433j) &(00000j) &1  &1 &(43110)&(55444j) &(00000j) &1 &1 &(41110)\\
\midrule
(44443j) &(00010j) &0 \;2 &1 \;1  &(62111) (62100)
&(55443j) &(00010j) &0 \;2 &1 \;1  &(42111)  (42100)\\
\midrule
(54443j) &(10010j) &1 &2 &(52110) &(54443j) &(10010j) &3  &1  &(52000)\\
\end{tabular}
 
 \medskip
 A mild extension of this computation shows that the product $(k\ell)\otimes\omega_r$
of $\mathrm{Sp}_{2m}$ modules has always at most 16 summands and enables one to write
down their explicit form.

\subsection{The skew Pieri rule in the stable range} 

The results of Subsection \ref{6.3} can be compared
with those obtained in the stable range in \cite{HTW}. 
In \cite{{HTW}} there is the following formula for a~decomposition of the tensor product of $\mathrm{Sp}_{2m}$-modules in terms of the classical Littlewood-Richardson coefficients. 
Denote by $\rho_m^F$ the irreducible module of the general linear group ${\rm GL}_m={\rm GL}_m(\C)$ corresponding to a~Young diagram $F$ with $\ell(F)\leq m$.
Then the Littlewood-Richardson coefficient $c^{F}_{D,E}$ is defined as the multiplicity
of the ${\rm GL}_m$-module $\rho_m^F$ 
in the tensor product
$\rho_m^D\otimes \rho_m^E.$

\begin{prop}\label{Howe}{\rm(\cite[2.1.3]{HTW})}
Let $D,E$ be Young diagrams with
$\ell(D),\ell(E)\leq m$. Assume that the stable range condition $\ell(D)+\ell(E)\leq m$ holds.
Then 
\begin{equation}\label{eq_stable}
\tau_{2m}^D\otimes \tau_{2m}^E\cong
\bigoplus_{F,\; \ell(F)\leq m}d^F_{D,E}\tau_{2m}^F
\end{equation}
where
$$d^F_{D,E}=\sum_{G_1,G_2,G_3}^{}
c^{F}_{G_1,G_2}
c^{D}_{G_1,G_3}
c^{E}_{G_2,G_3}
$$
where  the sum is taken over all Young diagrams $G_1,G_2,G_3$ with
$\ell(G_1),\ell(G_2),\ell(G_3)\leq m.
$ 
\end{prop}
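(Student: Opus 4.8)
The plan is to treat the tensor product as a branching problem for the symmetric pair $(\mathrm{Sp}_{2m}\times\mathrm{Sp}_{2m},\,\mathrm{Sp}_{2m}^{\Delta})$ and to compute it with a see-saw dual pair drawn from classical invariant theory, rather than from the exterior-algebra duality of Proposition~\ref{howethm}. On the polynomial algebra $\mathcal{P}(\C^{2m}\otimes\C^{k})$ the group $\mathrm{Sp}_{2m}$, acting through the first tensor factor, has a commutant generated by a copy of $\mathfrak{so}_{2k}$: the $\mathfrak{gl}_{k}$ coming from the $k$ copies of $\C^{2m}$, together with the $\mathrm{Sp}_{2m}$-invariant quadratic multiplication and contraction operators, which are antisymmetric in the column indices. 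Classical Howe duality then gives
\[\mathcal{P}(\C^{2m}\otimes\C^{k})\cong\bigoplus_{\ell(D)\le m}\tau^{D}_{2m}\otimes L^{D}_{2k},\]
where $L^{D}_{2k}$ is the irreducible lowest weight $\mathfrak{so}_{2k}$-module whose lowest $\mathrm{GL}_{k}$-type is $\rho_{k}^{D}$. I would take $k=k_{1}+k_{2}$ with $k_{1},k_{2}\ge m$, split $\C^{k}=\C^{k_{1}}\oplus\C^{k_{2}}$, and read the \emph{same} algebra through the two dual pairs $(\mathrm{Sp}_{2m}^{\Delta},\mathfrak{so}_{2k})$ and $(\mathrm{Sp}_{2m}\times\mathrm{Sp}_{2m},\mathfrak{so}_{2k_{1}}\oplus\mathfrak{so}_{2k_{2}})$.

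These two pairs fit into the see-saw
\[\begin{matrix} \mathrm{Sp}_{2m}\times\mathrm{Sp}_{2m} & & \mathfrak{so}_{2k_{1}}\oplus\mathfrak{so}_{2k_{2}}\\ \cup & & \cap\\ \mathrm{Sp}_{2m}^{\Delta} & & \mathfrak{so}_{2k}\,, \end{matrix}\]
and see-saw reciprocity identifies the multiplicity we want with an $\mathfrak{so}$-branching multiplicity of the corresponding lowest weight modules:
\[d^{F}_{D,E}=\dim\mathrm{Hom}_{\mathrm{Sp}_{2m}}\!\left(\tau^{F}_{2m},\,\tau^{D}_{2m}\otimes\tau^{E}_{2m}\right)=\dim\mathrm{Hom}_{\mathfrak{so}_{2k_{1}}\oplus\mathfrak{so}_{2k_{2}}}\!\left(L^{D}_{2k_{1}}\boxtimes L^{E}_{2k_{2}},\,L^{F}_{2k}\right).\]
Note that here it is the diagonal symplectic group that carries the finite dimensional modules $\tau^{\bullet}_{2m}$, so the entire problem is transported to the infinite dimensional orthogonal side foreshadowed in the Introduction.

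To evaluate the right-hand side I would use that $\mathfrak{so}_{2k}=\mathfrak{gl}_{k}\oplus\mathfrak{p}^{+}\oplus\mathfrak{p}^{-}$ with $\mathfrak{p}^{\pm}\cong\bigwedge\nolimits^{2}\C^{k}$, so that as a $\mathrm{GL}_{k}$-module $L^{F}_{2k}\cong\rho_{k}^{F}\otimes\mathrm{Sym}(\bigwedge\nolimits^{2}\C^{k})$. Under $\C^{k}=\C^{k_{1}}\oplus\C^{k_{2}}$ the nilradical splits as $\bigwedge\nolimits^{2}\C^{k}=\bigwedge\nolimits^{2}\C^{k_{1}}\oplus(\C^{k_{1}}\otimes\C^{k_{2}})\oplus\bigwedge\nolimits^{2}\C^{k_{2}}$. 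The two extremal summands rebuild the free-module structure of $L^{D}_{2k_{1}}$ and $L^{E}_{2k_{2}}$, so the multiplicity is read off from the lowest $\mathrm{GL}_{k_{1}}\times\mathrm{GL}_{k_{2}}$-types, which come from $\rho_{k}^{F}$ together with the cross term. Applying the $\mathrm{GL}_{k}\downarrow\mathrm{GL}_{k_{1}}\times\mathrm{GL}_{k_{2}}$ rule $\rho_{k}^{F}\mapsto\bigoplus_{G_{1},G_{2}}c^{F}_{G_{1},G_{2}}\,\rho_{k_{1}}^{G_{1}}\otimes\rho_{k_{2}}^{G_{2}}$ and the Cauchy identity $\mathrm{Sym}(\C^{k_{1}}\otimes\C^{k_{2}})=\bigoplus_{G_{3}}\rho_{k_{1}}^{G_{3}}\otimes\rho_{k_{2}}^{G_{3}}$, and then extracting the coefficient of $\rho_{k_{1}}^{D}\otimes\rho_{k_{2}}^{E}$ with the Littlewood--Richardson rule, collapses the multiplicity to
\[\sum_{G_{1},G_{2},G_{3}}c^{F}_{G_{1},G_{2}}\,c^{D}_{G_{1},G_{3}}\,c^{E}_{G_{2},G_{3}},\]
which is the asserted value of $d^{F}_{D,E}$.

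The main obstacle is the justification of this last computation under the stated hypothesis. I must verify that the stable range $\ell(D)+\ell(E)\le m$ makes $L^{F}_{2k}$ genuinely free over $\mathrm{Sym}(\bigwedge\nolimits^{2}\C^{k})$ on the generators $\rho^{F}_{k}$, so that multiplicities of $L^{D}_{2k_{1}}\boxtimes L^{E}_{2k_{2}}$ are detected exactly at the level of lowest $\mathrm{GL}$-types with no higher corrections, and that every Littlewood--Richardson coefficient arising is the genuinely stable one, with no modification rules and with all of $G_{1},G_{2},G_{3}$ of depth at most $m$ (which follows since $c^{D}_{G_{1},G_{3}}\neq0$ forces $\ell(G_{1}),\ell(G_{3})\le\ell(D)$, and similarly $\ell(G_{2}),\ell(G_{3})\le\ell(E)$). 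As a consistency check I would specialize to $E=(1^{r})$ to recover the Pieri-type case that feeds Subsection~\ref{6.3}, and note that the identical formula can be obtained independently through Koike--Terada universal characters, where the displayed sum is by definition the Newell--Littlewood product and the stable range is precisely the region in which universal characters agree with the honest characters of $\mathrm{Sp}_{2m}$.
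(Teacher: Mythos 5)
First, a remark on what there is to compare against: the paper does not prove this proposition at all — it is quoted verbatim from \cite[2.1.3]{HTW} — so the only benchmark is the cited source. Your architecture (the see-saw $(\mathrm{Sp}_{2m}^{\Delta},\mathfrak{so}_{2k})$ versus $(\mathrm{Sp}_{2m}\times\mathrm{Sp}_{2m},\mathfrak{so}_{2k_1}\oplus\mathfrak{so}_{2k_2})$ on $\mathcal{P}(\C^{2m}\otimes\C^{k})$, followed by reading off lowest $\mathrm{GL}$-types via Littlewood--Richardson branching and the Cauchy identity) is indeed the route of that reference, and it is exactly the ``infinite-dimensional'' reciprocity that the Introduction contrasts with the paper's new finite-dimensional one. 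The see-saw identity and the final combinatorial collapse to $\sum_{G_1,G_2,G_3}c^{F}_{G_1,G_2}c^{D}_{G_1,G_3}c^{E}_{G_2,G_3}$ are correct as you state them, \emph{granted} the freeness of the three lowest weight modules over the symmetric algebras of their nilradicals.

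That freeness — which you rightly single out as the main obstacle — fails for your choice of parameters, so as written there is a genuine gap. You take $k_1,k_2\ge m$, hence $k=k_1+k_2\ge 2m$, and in that range $L^{F}_{2k}$ is in general \emph{not} free over $\mathrm{Sym}(\bigwedge^{2}\C^{k})$, no matter what stable range condition $D$ and $E$ satisfy. For example, with $m=2$ and $k=4$ one checks by comparing graded dimensions in $\mathcal{P}(\C^{4}\otimes\C^{4})$ that the degree-$4$ piece of $L^{(1,1)}_{8}$ has dimension $35$ rather than $\dim\bigl(\bigwedge^{2}\C^{4}\otimes\rho^{(1,1)}_{4}\bigr)=36$ (the $\det$-isotypic constituent dies), even though $D=E=(1)$ is comfortably in the stable range for $m=2$. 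The condition that actually guarantees the separation of variables $\mathcal{P}(\C^{2m}\otimes\C^{k})\cong\mathcal{H}\otimes\mathrm{Sym}(\bigwedge^{2}\C^{k})$ with multiplicity-free harmonics, hence $L^{F}_{2k}\cong\rho^{F}_{k}\otimes\mathrm{Sym}(\bigwedge^{2}\C^{k})$ for every relevant $F$, is $k\le m$. So you must choose $k_1,k_2$ \emph{small}, not large: take $k_1=\ell(D)$ and $k_2=\ell(E)$ (these suffice for $\tau^{D}_{2m}$ and $\tau^{E}_{2m}$ to occur in the respective polynomial models, and every $F$ with $d^{F}_{D,E}\neq 0$ has $\ell(F)\le k_1+k_2$). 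Then the stable range hypothesis $\ell(D)+\ell(E)\le m$ is precisely the statement $k\le m$, all three modules become full generalized Verma modules, and your computation closes exactly as you describe. With that correction the proof is sound and coincides with the argument of \cite{HTW}; your closing observation that the formula also follows from Koike--Terada universal characters is likewise valid, and is in fact the mechanism the paper invokes for the extension in Proposition \ref{semistable}.
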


There is an interesting way to reformulate
the result at least for the case of the tensor product $\tau_{2m}^D\otimes\omega_r$.
  Before stating it, we introduce certain notation. If $D=(d_1,...,d_m)$ and
$E=(e_1,...,e_m)$ are Young diagrams such that $d_i\geq e_i$ for all $i$, then we write $E\subset D$. In this case,  we define the skew diagram $D/E$ to be the diagram obtained by removing all boxes belonging to $E$ from $D$. 
The size of $D/E$ is $|D/E|=|D|-|E|$. For example, if $D=(6,6,5,3,2)$ and $E=(4,4,2,1)$, then $D/E$ is the following skew diagram:
\[D/E=\  \tableau[s]{\fl&\fl&\fl&\fl&&\\
\fl&\fl&\fl&\fl&&\\
\fl&\fl&&&\\
\fl&&\\
&}
\]

Finally, we call the skew diagram $D/E$ a~vertical strip if $d_i-e_i\leq 1$ for all $i$.   
Then it is a~simple observation that the statement of Proposition \ref{Howe} for $\tau_{2m}^D\otimes\omega_r$ is equivalent to the following result. 

\begin{prop}\label{stable}
Let $D$ be a~Young diagram with $\ell(D)\leq m$ and let $\omega_r=\tau_{2m}^{(1^r)}$. 
Assume that the stable range condition $r+\ell(D)\leq m$ holds.  Then 
for a Young diagram $F$ with $\ell(F)\le m$, the multiplicity of $\tau^F_{2m}$ in
the tensor product $\tau_{2m}^D\otimes\omega_r$ is equal to the number of Young diagrams $E$   satisfying the following three conditions:
\begin{itemize}
\item[(i)] $E\subset D$ and $E\subset F$.
\item[(ii)]  $D/E$ and $F/E$ are both vertical strips.
\item[(iii)] $|D/E|+|F/E|=r$.
\end{itemize}
\end{prop}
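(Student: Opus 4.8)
The plan is to specialize the stable-range formula of Proposition~\ref{Howe} to the case $E=(1^r)$ and then to simplify the resulting triple sum of Littlewood--Richardson coefficients by direct evaluation. Since $\ell((1^r))=r$, the stable range hypothesis $\ell(D)+\ell((1^r))\le m$ of Proposition~\ref{Howe} is precisely the assumption $r+\ell(D)\le m$ made here, so the formula
\[
d^F_{D,(1^r)}=\sum_{G_1,G_2,G_3} c^F_{G_1,G_2}\,c^D_{G_1,G_3}\,c^{(1^r)}_{G_2,G_3}
\]
applies, the sum running over Young diagrams $G_1,G_2,G_3$ with at most $m$ rows. It then remains to show that the surviving terms are indexed exactly by the diagrams $E$ described in (i)--(iii).

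First I would dispose of the factor $c^{(1^r)}_{G_2,G_3}$. Because a Littlewood--Richardson coefficient $c^\lambda_{\mu\nu}$ vanishes unless $\mu\subset\lambda$ and $\nu\subset\lambda$, and $(1^r)$ is a single column, both $G_2$ and $G_3$ must be single columns, say $G_2=(1^s)$ and $G_3=(1^t)$ with $0\le s,t\le r$. For such columns the Pieri rule gives $c^{(1^r)}_{(1^s),(1^t)}=1$ when $s+t=r$ and $0$ otherwise. This collapses the sum over $G_2,G_3$ to a single sum over $G_1$ together with the constraint $s+t=r$.

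Next I would evaluate the two remaining factors by the Pieri rule for multiplication by a fundamental (exterior-power) representation: $c^F_{G_1,(1^s)}$ equals $1$ if $G_1\subset F$ and $F/G_1$ is a vertical strip of size $s$, and is $0$ otherwise; likewise $c^D_{G_1,(1^t)}$ equals $1$ if $G_1\subset D$ and $D/G_1$ is a vertical strip of size $t$, and is $0$ otherwise. Writing $E:=G_1$, a term contributes, necessarily with value $1$, exactly when $E\subset D$, $E\subset F$, both $D/E$ and $F/E$ are vertical strips, and $s+t=|F/E|+|D/E|=r$. Summing these contributions reproduces the count of diagrams $E$ satisfying conditions (i)--(iii), which is the asserted multiplicity.

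This argument is a routine specialization rather than a deep computation, so I do not anticipate a serious obstacle; the only points needing care are bookkeeping. One should verify that the row-length restrictions $\ell(G_i)\le m$ in Proposition~\ref{Howe} are automatically met, as $s,t\le r\le m$ and $E\subset D,F$ force $\ell(E)\le m$, so that no admissible term is discarded, and one should confirm that the two Pieri evaluations are used in their vertical-strip form, since the distinguished factor $\omega_r$ is a single column. Once these are in place, the identification of the index set with (i)--(iii) is immediate.
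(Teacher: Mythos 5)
Your proposal is correct and follows exactly the route the paper intends: the paper offers no separate proof of Proposition~\ref{stable}, merely asserting it is ``a simple observation'' equivalent to Proposition~\ref{Howe} specialized to $\omega_r=\tau_{2m}^{(1^r)}$, and your computation (forcing $G_2=(1^s)$, $G_3=(1^t)$ with $s+t=r$, then applying the dual Pieri rule to identify $G_1$ with the diagram $E$ of conditions (i)--(iii)) is precisely that observation carried out in detail. No gaps.
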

 
In Proposition \ref{stable}, condition (iii) says that any $F$ is obtained from $D$ by first  removing $i$ boxes and then adding $j$ boxes for some $i$ and $j$ such that  $i+j=r$.
Condition (ii) means that we can add or remove at most one box in each row. 

It turns out that  the result of Proposition \ref{stable} can be extended to one more case outside the stable range.  

\begin{prop}\label{semistable}
	The statement of Proposition \ref{stable} holds true even under
	a~weaker condition $r+\ell(D)\leq m+1$.  
\end{prop}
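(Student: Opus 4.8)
The plan is to prove the extension at the level of universal characters, using the symplectic modification rules to absorb the single row of overflow that the hypothesis $r+\ell(D)\le m+1$ permits. I would work in the ring of universal symplectic characters $\mathbf{sp}_\lambda$, indexed by arbitrary partitions (as in \cite{KT}), whose images in the representation ring of $\mathrm{Sp}_{2m}$ equal $\tau^\lambda_{2m}$ when $\ell(\lambda)\le m$ and are otherwise computed by the modification rules. There the product with a column is the universal Pieri rule
\[
\mathbf{sp}_D\cdot\mathbf{sp}_{(1^r)}=\sum_{E'}N_{E'}\,\mathbf{sp}_{E'},
\]
where $N_{E'}$ is the number of Young diagrams $E$ satisfying (i)--(iii) with $F$ replaced by $E'$; this is exactly Proposition \ref{stable} read as a stable, hence $m$-independent, identity (cf.\ Proposition \ref{Howe} and \cite{HTW}). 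The proposition will then follow once I show that every $E'$ with $\ell(E')>m$ occurring in this sum has vanishing image in $\mathrm{Sp}_{2m}$.

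First I would bound the overflow. Each $E'$ is reached from $D$ by removing a vertical strip to some $E\subset D$ and then adjoining a vertical strip, the two sizes summing to $r$, so, using $\ell(E)\le\ell(D)$, one has $\ell(E')\le\ell(E)+(r-|D/E|)\le\ell(D)+r-|D/E|\le m+1$. Thus the only diagrams outside the dominant range have depth exactly $m+1$, and then every inequality is an equality: $|D/E|=0$ (so $E=D$), $\ell(D)=m+1-r$, and all $r$ adjoined boxes lie in new rows. Hence the overflow diagrams are exactly
\[
E'=(d_1,\dots,d_{\ell(D)},\underbrace{1,\dots,1}_{r}),\qquad E'_{m+1}=1 .
\]
The crux is the modification statement that such a diagram vanishes: a partition $\mu$ with $\ell(\mu)=m+1$ and $\mu_{m+1}=1$ satisfies $\mathrm{sp}^{(m)}_\mu=0$ for $\mathrm{Sp}_{2m}$ (the simplest case of King's rules, \cite{KT}). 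Granting it, all overflow terms disappear and the coefficient of each $\tau^F_{2m}$ with $\ell(F)\le m$ is $N_F$, the vertical-strip count of Proposition \ref{stable}. When $r+\ell(D)\le m$ there is no overflow at all, recovering the stable case verbatim.

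I expect this vanishing of single-box overflow to be the only real obstacle, since it is precisely what separates $m+1$ from $m+2$. I would either invoke King's modification rules for it, or, to keep the argument internal to the paper, verify it through Theorem \ref{thm_pieri} by identifying the true multiplicity of the affected $F$ with the Lepowsky number $\mathbf{m}^G_{j_{n,m}(D),(m-r)}$, exactly as in the worked Example, which is itself a semistable instance. Finally I would record sharpness: as soon as $r+\ell(D)=m+2$, diagrams of depth $m+2$ appear and need not vanish---indeed $\mathrm{sp}^{(m)}_{(1^{m+2})}=-\tau^{(1^{m})}_{2m}$---so genuine cancellations occur. The decomposition of $\omega_2\otimes\omega_2$ for $\mathrm{Sp}_4$, where the naive rule predicts a spurious copy of $\tau^{(1,1)}_4$ that is killed exactly by the depth-$4$ term $(1^4)$, confirms that the formula fails beyond $m+1$.
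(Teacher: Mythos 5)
Your argument is correct and follows essentially the same route as the paper: both work at the level of universal symplectic characters (the identity \eqref{eq_symmetric} from \cite{KT,Oka}), apply the specialization to $\mathrm{Sp}_{2m}$, and observe that under $r+\ell(D)\leq m+1$ the only terms outside the dominant range have depth exactly $m+1$ and hence vanish under the Koike--Terada modification rules. Your version merely makes explicit the overflow bound and the shape of the vanishing diagrams, which the paper leaves to the references.
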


 \begin{proof}
This follows directly from \cite{KT}.
Indeed, for a Young diagram $F$, denote by $s_{\langle F\rangle}$ the corresponding symplectic Schur function.  
In \cite{KT}  (see also \cite[Proposition 2.2]{Oka}), a~ring homomorphism $\pi_{\mathrm{Sp}_{2m}}$ from the symmetric functions to the character ring of $\mathrm{Sp}_{2m}$ is constructed such that $\pi_{\mathrm{Sp}_{2m}}(s_{\langle F\rangle})$ is the irreducible character for $F$ when $\ell(F)\leq m$, and 
$\pi_{\mathrm{Sp}_{2m}}(s_{\langle F\rangle})=0$ when $\ell(F)= m+1$. Furthermore, it is known that,  for any Young diagrams $D,E$,
\begin{equation}\label{eq_symmetric}
s_{\langle D\rangle}\cdot s_{\langle E\rangle}=
\bigoplus_{F}
\left( \sum_{G_1,G_2,G_3}^{}
c^{F}_{G_1,G_2}
c^{D}_{G_1,G_3}
c^{E}_{G_2,G_3}\right)
s_{\langle F\rangle}
\end{equation}
where  the sums are taken over all Young diagrams $F,G_1,G_2,G_3$
(see \cite[Proposition 2.3]{Oka}).  
Finally, applying the ring homomorphism $\pi_{\mathrm{Sp}_{2m}}$ to \eqref{eq_symmetric}, we show easily that the formula \eqref{eq_stable} holds true   
for the tensor product $\tau_{2m}^D\otimes\omega_r$ even when $r+\ell(D)\leq m+1$. In fact, in this case the coefficient of $s_{\langle F\rangle}$ is non-zero only if $\ell(F)\leq m+1$. See \cite{KT,Oka} for more details.
\end{proof}

A result analogous to Proposition \ref{semistable} was obtained in \cite[Theorem 4.4]{Sun} for the tensor product $\tau_{2m}^D\otimes\bigwedge^r(\C^{2m})$ of $\tau^D_{2n}$ with any antisymmetric power $\bigwedge^r(\C^{2m})$ of the defining representation for $\mathrm{Sp}_{2m}$ (see also \cite[Theorem 4.1]{Oka}). In this case, the submodules of the tensor product are $\tau_{2m}^F$ for $F$ made from $D$ by first adding $i$ boxes and then removing $j$ boxes for some $i,j$ such that $i+j=r$.
From the fact that $\omega_r\cong \bigwedge^r(\C^{2m})/\bigwedge^{r-2}(\C^{2m})$, one can obtain the decomposition of  $\tau_{2m}^D\otimes\omega_r$ in the general case.

\end{document}